\documentclass[11pt]{amsart}
\usepackage{microtype}
\usepackage{amsmath,amssymb,amsthm,amsfonts}
\usepackage{mathrsfs}
\usepackage{tikz-cd}

\usepackage{graphicx}
\usepackage{caption,subcaption}

\usepackage[T1]{fontenc}
\usepackage[utf8]{inputenc}
\usepackage{slashed}
\usepackage{dsfont}

\usepackage{hyperref}
\usepackage{mathtools}
\mathtoolsset{showonlyrefs}

\usepackage{csquotes}

\usepackage[margin=0.9in]{geometry}

\DeclareMathOperator{\Rm}{R}
\DeclareMathOperator{\diverg}{div}

\newcommand{\dd}{\mathop{}\!\mathrm{d}}
\newcommand{\A}{\mathbb{A}}
\newcommand{\D}{\slashed{D}}
\newcommand{\p}{\partial}
\newcommand{\al}{\alpha}
\newcommand{\be}{\beta}
\newcommand{\na}{\nabla}
\newcommand{\R}{\mathbb{R}}
\newcommand{\C}{\mathbb{C}}
\newcommand{\pd}{\slashed{\partial}}
\newcommand{\MS}[1]{M^{#1}}
\newcommand{\vph}{\varphi}
\newcommand{\vep}{\varepsilon}

\newtheorem{thm}{Theorem}[section]
\newtheorem{lemma}[thm]{Lemma}
\newtheorem{Def}{Definition}[section]

\title[Coarse regularity]{Coarse Regularity of Solutions to a Nonlinear Sigma-model with~$L^p$~gravitino}

\begin{document}

\author{Jürgen Jost, Ruijun Wu, Miaomiao Zhu}

\address{Max Planck Institute for Mathematics in the Sciences\\Inselstr. 22--26\\D-04103 Leipzig, Germany}
	\email{jjost@mis.mpg.de}
	
\address{Max Planck Institute for Mathematics in the Sciences\\Inselstr. 22--26\\D-04103 Leipzig, Germany}
	\email{Ruijun.Wu@mis.mpg.de}

\address{School of Mathematical Sciences, Shanghai Jiao Tong University\\Dongchuan Road 800\\200240 Shanghai, P.R.China}
	\email{mizhu@sjtu.edu.cn}

\thanks{%
\emph{Acknowledgements:}
Ruijun Wu thanks the International Max Planck Research School Mathematics in the Sciences for financial support.
Miaomiao Zhu was supported in part by the National Natural Science Foundation of China (No. 11601325).
}

\date{\today}

\begin{abstract}
 The regularity of weak solutions of a two-dimensional nonlinear sigma model with coarse gravitino is shown. Here the gravitino is only assumed to be in $L^p$ for some $p>4$. The precise regularity results  depend on the value of $p$.
\end{abstract}

\keywords{nonlinear sigma model, gravitino, regularity}

\maketitle

\section{Introduction}
The action functionals of the various models of quantum field theory yield many examples of beautiful variational problems. These problems are usually analytically very difficult, because they represent borderline cases, due to phenomena like conformal invariance. What makes them still tractable usually is their intricate algebraic structure resulting from the various symmetries of and the interactions between the various fields involved. Mathematically, often a geometric interpretation of these algebraic structures is possible. In any case, the analysis needs to use the special structure of the action functional. A well known instance is the theory of harmonic mappings from Riemann surfaces to Riemannian manifolds, which in the context of QFT arise from the action functional of the nonlinear sigma model, or the Polyakov action of string theory. Here, a particular skew symmetry of the nonlinear term in the Euler-Lagrange equations could be systematically exploited and generalized in the work of H\'elein, Rivi\`ere and Struwe, see \cite{helein1991,riviere2007conservation,riviere2010conformally,riviere2008partial}. This is also our starting point, both conceptually -- because we generalize the harmonic map problem --  and methodologically -- because we shall use their techniques. In fact, the action functional of the nonlinear sigma model and the Polyakov action of string theory constitute only the simplest of their kind. In more sophisticated models, other fields enter, in particular a spinor field. Also, when one investigates the harmonic action functional mathematically, naturally also another object enters, the metric $g$ or the conformal structure of the underlying Riemann surface, and for many purposes, not only the field, but also $g$ should be varied. Again, however, in the advanced QFT models, there arises another object, a kind of partner of the metric $g$, the gravitino $\chi$, also called the Rarita-Schwinger field. In harmonic map theory, or in related theories, like Teichm\"uller theory \`a la Ahlfors-Bers, one often needs to consider metrics $g$ that are not necessarily smooth, and this may lead to delicate regularity questions. Likewise, the gravitino is not necessarily smooth, and in this paper we address the related regularity questions

In fact, this article is a part of our systematic study of an action functional motivated from super string theory. Let us now describe its ingredients in more precise terms. They are  a map from an oriented Riemann surface to a compact Riemannian manifold and its super partner, a vector spinor, with the  Riemannian metric of the domain and its super partner, the gravitino, as parameters. This action functional is the two-dimensional nonlinear sigma model of quantum field theory, which has been studied for a long time both in physics and mathematics. Such models have been used in supersymmetric string theory since the 1970s, see e.g. \cite{deser1976complete, brink1976locally}.   We refer to \cite{deligne1999quantum, jost2009geometry, jost2014super} for more details about the  mathematical aspects.

In a recent work \cite{jost2016regularity}, a corresponding geometric model was set up and some analytical issues were studied. In contrast with the previous models which use anticommuting fields and which are therefore not directly amenable to the methods of geometric analysis, this model uses only commuting fields and thus is given within the context of Riemannian geometry. Though this approach makes the supersymmetries involved less transparent, it has the advantages that this model is closely related to  mathematically long-studied models such as harmonic maps and Dirac-harmonic maps and their various variants. In \cite{jost2016regularity}, a detailed setup for this two-dimensional nonlinear sigma model was developed. On this basis, now  the regularity issues can be investigated. The smoothness of  weak solutions of the Euler--Lagrange equations, with smooth Riemannian metric and gravitino, was obtained in \cite{jost2016regularity}.

The analysis of two-dimensional harmonic maps, and even more so, of Dirac-harmonic maps is quite subtle, because they constitute borderline cases for the regularity theory, with phenomena like bubbling. While the harmonic map case by now can be considered as well understood, and much is known about Dirac-harmonic maps, it turns out that major new  difficulties from the analytical perspective are caused by the gravitino, even if the gravitino is treated only as a parameter and not as a dependent variable in its own right. These difficulties arise from the way the gravitino is coupled with the spinor field in the action functional, see \eqref{action functional} below. These difficulties become even more severe if the gravitino in the model is not smooth. More precisely, we encounter the following question: \emph{what is the weakest possible assumption on the gravitino and under such an assumption how smooth will the critical points of the action functional  be? } Apparently in general we can no longer  expect $C^\infty$ differentiability, but one may still hope to improve the original regularity of the weak solutions.  Here we explore this  issue. We shall combine the regularity theory of \cite{riviere2007conservation,riviere2010conformally,riviere2008partial} with Morrey space theory and a subtle iteration argument to achieve what should be the optimal regularity results in our setting. 

Let us briefly recall the framework of the model in \cite{jost2016regularity}. For details we refer to that article and the references therein. Let $(M,g)$ be a closed Riemannian surface with a fixed spin structure and $(N,h)$ an $n$-dimensional closed Riemannian manifold. Let $S$ be a spinor bundle over $M$ associated to the given spin structure, which has real rank four. This spinor bundle is a Dirac bundle in the sense of \cite{lawson1989spin}. In particular, there is a canonical spin connection and spin metric which is a fiberwise real inner product\footnote{Note that in several previous works there was some ambiguity about the fiber metric, and here we take the real one rather than the Hermitian one, as clarified in \cite{jost2016regularity}.}. The Clifford multiplication by a tangent vector will be denoted by a dot when no confusion can arise. A gravitino is by definition a section $\chi$ of the vector bundle $S\otimes TM$. The Clifford multiplication gives rise to a map~$\delta_\gamma\colon S\otimes TM\to S$, which is given by multiplying the tangent vectors to the spinors. This map is linear and surjective, and moreover the following short exact sequence splits:
\begin{equation}
 0\to ker \to S\otimes TM \to S \to 0.
\end{equation}
The projection map to the kernel is denoted by $Q:S\otimes TM\to S\otimes TM$. In a local oriented orthonormal frame $(e_\al)$ of $M$, using the summation convention as always, this projection is given by
\begin{equation}
 Q(\chi^\al\otimes e_\al)=-\frac{1}{2} e_\be\cdot e_\al\cdot \chi^\be\otimes e_\al.
\end{equation}

Now let $\phi\colon M\to N$ be a map between Riemannian manifolds. One can consider the twisted spinor bundle $S\otimes \phi^*TN$. It is again a Riemannian vector bundle over $M$ and on it a twisted spin Dirac operator $\D$ is defined, which is essentially self-adjoint with respect to the inner product in $L^2(S\otimes \phi^*TN)$. Then the action functional is given by
\begin{equation}
 \label{action functional}
	\begin{split}
		\A(\phi, \psi;g, \chi)\coloneqq \int_M & |\dd \phi|_{T^*M\otimes \phi^*TN}^2
			+ \langle \psi, \D \psi \rangle_{S\otimes \phi^*TN}  \\
		&  -4\langle (\mathds{1}\otimes\phi_*)(Q\chi), \psi \rangle_{S\otimes\phi^*TN}
			-|Q\chi|^2_{S\otimes TM} |\psi|^2_{S\otimes \phi^*TN}
			-\frac{1}{6} \Rm^{N}(\psi) \dd vol_g,
	\end{split}
\end{equation}
where $\Rm^N$ is the pullback of the curvature of $N$ under $\phi$, and the curvature term in the action is defined, in a local coordinate $(y^i)$ of $N$ and with $\psi=\psi^i\otimes \phi^*(\frac{\p}{\p y^i})$, by
\begin{equation}
 -\frac{1}{6}\Rm^N(\psi)=-\frac{1}{6}\Rm^{N}_{ijkl}\left\langle \psi^i,\psi^k\right\rangle_S \left\langle\psi^j,\psi^l\right\rangle_S.
\end{equation}
One can easily check that this is independent of the choices of local orthonormal frames. Note that this action functional can actually be defined on the space
\begin{equation}
	\mathcal{X}^{1,2}_{1,4/3}(M,N)=\{(\phi,\psi)\big| \phi\in W^{1,2}(M,N), \psi\in\Gamma^{1,4/3}(S\otimes\phi^*TN)\}.
\end{equation}
Here by $\Gamma^{1,4/3}(S\otimes\phi^*TN)$ we mean the space of $W^{1,4/3}$ sections of the twisted spinor bundle \(S\otimes \phi^*TN\). It is then clear that an $L^4$ assumption on $\chi$ is sufficient to make the action functional well defined and finite valued.

We remark that the Lagrangian of the action appears in this form for  reasons of supersymmetry. Note that in the particular case where the gravitino vanishes, this reduces to the Dirac-harmonic map functional with curvature term introduced in \cite{chen2007} and further studied in e.g. \cite{branding2015some, branding2015energy, jost2015geometric}. If in addition, the curvature terms in the Lagrangian also vanish, this reduces to the Dirac-harmonic map functional introduced in \cite{chen2005regularity, chen2006dirac}, which is studied to a great extent in e.g.  \cite{jost2008riemannian, wang2009regularity, zhu2009regularity, chen2011boundary, sharp2016regularity}.

Taking a local oriented orthonormal frame $\{e_\al| \al=1,2\}$, the Euler--Lagrange equations are
\begin{equation}
 \begin{split}
 \tau(\phi)=&\frac{1}{2}\Rm^{\psi^*TN}(\psi, e_\al\cdot\psi)\phi_* e_\al-\frac{1}{12}S\na R(\psi)   \\
            &  -(\langle \na^S_{e_\be}(e_\al \cdot e_\be \cdot \chi^\al), \psi \rangle_S
	        	+ \langle e_\al \cdot e_\be \cdot \chi^\al, \na^{S\otimes\phi^*TN}_{e_\be} \psi \rangle_S),  \\
    \D\psi =& |Q\chi|^2\psi +\frac{1}{3}SR(\psi)+2(\mathds{1}\otimes \phi_*)Q\chi,
 \end{split}
\end{equation}
where we have used the following abbreviations:
\begin{equation}
 \begin{split}
  \Rm^{\phi^*TN}(\psi, e_\al\cdot\psi)\phi_* e_\al
   &=R^i_{jkl}\langle\phi^k,\na\phi^j\cdot\psi^l\rangle_S\;\phi^*(\frac{\p}{\p y^i}), \\
  SR(\psi)
   &=R^i_{jkl}(\phi)\langle\psi^l,\psi^j\rangle_S\psi^k\otimes\phi^*(\frac{\p}{\p y^i}), \\
  S\na R(\psi)
   &=\phi^*(\na^N R)_{ijkl}\langle\psi^i,\psi^k\rangle_S \langle\psi^j,\psi^l\rangle_S.
 \end{split}
\end{equation}

To deal with the regularity it is advantageous to embed $(N,h)$ isometrically into some Euclidean space, say $\R^K$, and transfer the various quantities on $N$ to their images/pushforwards of $\R^K$. Let $f\colon (N,h)\hookrightarrow (\R^K, \delta)$ be such a smooth isometric embedding with second fundamental form~$A$, and let $\phi'\equiv f\circ \phi\colon M\to f(N)\subset \R^K$ be the composed map and $\psi'\equiv f_{\#}\psi$ the pushforward vector spinor. It suffices to consider the regularity of $(\phi',\psi')$. Let $\{u^a|a=1,\cdots, K\}$ be the global coordinates of $\R^K$ and let $\nu_l, l=n+1,\cdots, K,$ be a local normal frame of the submanifold $f(N)$. Then $\phi'=(\phi'^1,\cdots,\phi'^K)$ can be viewed as a $\R^K$-valued function, and $\psi'=(\psi'^1,\cdots,\psi'^K)$, where each $\psi'^a$ is a spinor, satisfies
\begin{equation}
 \sum_a \nu_l^a\psi'^a=0, \quad n+1\le l\le K.
\end{equation}
Since regularity is a local issue, we may locate the problem on the unit disk $B_1\subset \R^2\cong \C$. Then the equations satisfied by $(\phi',\psi')$ are
\begin{equation}\label{equation for phi'}
 \Delta\phi'^a=(\omega_{\al}^{ab}+F^{ab}_\al+ T^{ab}_\al)\frac{\p\phi'^b}{\p x^\al}
               +Z^a_{ebcd}\left\langle\psi'^e,\psi'^c\right\rangle \langle\psi'^b,\psi'^d\rangle-\diverg V'^a,
\end{equation}
and
\begin{equation}\label{equation for psi'}
 \begin{split}
  \pd\psi'^a=& -\na\phi'^d\cdot\psi'^b\frac{\p\nu^b_l}{\p u^d}\nu^a_l(\phi')+|Q\chi|^2\psi'^a \\
           &+\frac{1}{3} \left(\langle\psi'^b,\psi'^d\rangle\psi'^c-\langle\psi'^c,\psi'^b\rangle\psi'^d\right)
             \frac{\p \nu^b_l}{\p u^d}\left(\frac{\p \nu_l}{\p u^c}\right)^{\top,a}
              -e_\al\cdot\na\phi'^a\cdot\chi^\al,
 \end{split}
\end{equation}
for each $a$, where $\Delta$ is the Euclidean Laplacian operator, $\pd$ the Euclidean Dirac operator, $\na$ the Euclidean gradient operator, the coefficients are written in the following antisymmetric form:
\begin{equation}
 \begin{split}
  \omega^{ab}_\al=&-\left(\frac{\p\phi'^c}{\p x^\al}\frac{\p\nu^a_l}{\p u^c}\nu^b_l
                          -\frac{\p\phi'^c}{\p x^\al}\frac{\p \nu^b_l}{\p u^c}\nu^a_l\right)=-\omega^{ba}_{\al}, \\
  F^{ab}_\al=& \left\langle\psi'^c, e_\al\cdot\psi'^d\right\rangle
              \left(\left(\frac{\p\nu_l}{\p u^d}\right)^{\top,b}\left(\frac{\p\nu_l}{\p u^c}\right)^{\top,a}
                    -\left(\frac{\p\nu_l}{\p u^d}\right)^{\top,a}\left(\frac{\p\nu_l}{\p u^c}\right)^{\top,b}\right)
             =-F^{ba}_\al, \\
  T^{ab}_\al=& \left(\frac{\p\nu^c_l}{\p u^b}V'^c_\al\nu^a_l-\frac{\p\nu^c_l}{\p u^a}V'^c_\al\nu^b_l\right)=-T^{ba}_\al,
 \end{split}
\end{equation}
while
\begin{equation}
\begin{split}
 V'^a=&\left\langle e_\al\cdot e_\be\cdot\chi^\al,\psi'^a\right\rangle_S e_\be, \\
 Z^a_{ebcd}=& -\frac{1}{6}\left(\langle\na A_{ik}, A_{jl}\rangle-\langle\na A_{il}, A_{jk}\rangle\right)
              \frac{\p y^i}{\p u^a}\frac{\p y^j}{\p u^b} \frac{\p y^k}{\p u^c} \frac{\p y^l}{\p u^d}.
\end{split}
\end{equation}
For a detailed clarification of these formulae we refer to \cite{jost2016regularity}. For the cases of the simpler models, namely Dirac-harmonic maps and Dirac-harmonic maps with curvature terms, see \cite{zhu2009regularity, chen2011boundary, sharp2016regularity, branding2015some, jost2015geometric}.

From an analytical point of view, we shall be considering the following more general system which contains the essential information: suppose that $\phi\in W^{1,2}(B_1,\R^K)$ and $\psi\in W^{1,4/3}(B_1,\R^4\otimes\R^K)$ satisfy
\begin{equation}\label{equation for phi}
 \Delta\phi^a=\Omega^{ab}\na\phi^b+Z^a|\psi|^4+\diverg V^a,
\end{equation}
and
\begin{equation}\label{equation for psi}
 \pd\psi^a=A^{ab}\psi^b+B^a,
\end{equation}
where $\Omega^{ab}\in L^2(B_1,\R^2)$, $Z^a\in L^\infty(B_1,\R)$ $A^{ab}\in L^2(B_1,\mathfrak{gl}(4K,\R))$ and
\begin{align}
 B^a=-e_\al\cdot\na\phi^a\cdot\chi^\al, & &V^a=\langle e_\al\cdot e_\be\cdot\chi^\al, \psi^a\rangle_S e_\be.
\end{align}
The important feature is that $\Omega$ is antisymmetric:
\begin{equation}
 \Omega^{ab}=-\Omega^{ba}.
\end{equation}

As it is a critical elliptic system, one expects some higher regularity of the solutions than what is assumed  apriori. Unfortunately, if $\chi$ is only assumed to be $L^4$, it is not yet clear how to achieve this. Therefore, we first  try to deal with an $L^p$ gravitino with $4<p\le \infty$. As we shall see in this article, this allows us to obtain some regularity results for the solutions of \eqref{equation for phi}-\eqref{equation for psi}. In this article we adopt the following convention.

\begin{Def}
 Let $1<p\le \infty$. We say that a measurable function $u\colon (X,\mu)\to \R$ is an almost $L^p$ function, denoted by $u\in L^{p-o}(X,\mu)$, if $u\in L^q(X,\mu)$ for any $1\le q<p$.
\end{Def}

For example, for a bounded domain $U\subset \R^2$ with smooth boundary (actually a Lipschitz boundary is enough), the Sobolev embedding theorem says
\begin{equation}
 W^{1,2}_0(U)\hookrightarrow L^{\infty-o}(U).
\end{equation}

Then we can state the first result.
\begin{thm}\label{result for abstract system}
 Let $4<p\le \infty$, and $\chi\in L^p(B_1)$. Let $\phi\in W^{1,2}(B_1,\R^K)$ and $\psi\in W^{1,4/3}(B_1,\R^4\otimes\R^K)$ be a weak solution of the system \eqref{equation for phi}-\eqref{equation for psi}. Then for $p_0=\frac{8}{5}+\frac{16}{15}\sqrt{6}\approx 4.2132\cdots$, the following holds:
 \begin{enumerate}
  \item[(1)] If $p>p_0$, then $\psi\in W^{1,p/2}_{loc}(B_1)$ and $\phi\in W^{1,p}_{loc}(B_1)$. Furthermore, there exists an $\vep=\vep(p)>0$ such that whenever $\|\phi\|_{W^{1,2}(B_1)}+\|\psi\|_{L^4(B_1)}\le \vep$, then for any $U\Subset B_1$,
             \begin{equation}
              \|\phi\|_{W^{1,p}(U)}+\|\psi\|_{W^{1,p/2}(U)}\le C\left(\|\phi\|_{W^{1,2}(B_1)}+\|\psi\|_{L^4(B_1)}\right)
             \end{equation}
             for some constant $C=C(p,U,\|Q\chi\|_{L^p(B_1)})>0$.
  \item[(2)] If $4<p\le p_0$, then there exist some $t_*=t_*(p)\in (4,\infty)$ and $q_*=q_*(p)\in (2,\frac{2p}{p-2})$ such that $\psi\in W^{1,\frac{2t_*}{2+t_*}-o}_{loc}(B_1)\hookrightarrow L^{t_*-o}_{loc}(B_1)$ and $\phi\in W^{1,q_*-o}(B_1)$. Furthermore, there exists an $\vep=\vep(p)>0$ such that whenever $\|\phi\|_{W^{1,2}(B_1)}+\|\psi\|_{L^4(B_1)}\le \vep$, then for any $U\Subset B_1$, and for any $t<t_*$ and $q<q_*$,
             \begin{equation}
              \|\phi\|_{W^{1,q}(U)}+\|\psi\|_{W^{1,\frac{2t}{2+t}}(U)}\le C\left(\|\phi\|_{W^{1,2}(B_1)}+\|\psi\|_{L^4(B_1)}\right)
             \end{equation}
             for some constant $C=C(p,q,t,U,\|Q\chi\|_{L^p(B_1)})>0$.
 \end{enumerate}
\end{thm}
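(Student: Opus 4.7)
The plan is to use Rivière's conservation-law framework, adapted to Morrey spaces, for the $\phi$-equation; to combine it with Calderón--Zygmund theory for the Dirac operator on the $\psi$-equation; and to close the argument by a joint iteration on the integrability and Morrey-decay exponents of $(\nabla\phi,\psi)$.

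I would first set up the small-energy regime. Under the hypothesis $\|\phi\|_{W^{1,2}}+\|\psi\|_{L^4}\le\varepsilon$, Rivière's Coulomb gauge can be constructed for the antisymmetric $\Omega$, converting the $\phi$-equation into a form whose principal part absorbs $\Omega\nabla\phi$ and whose residual right-hand side is $Z|\psi|^4+\diverg V$. An $\epsilon$-regularity step then gives a baseline Morrey estimate $\nabla\phi \in L^{2,\lambda_0}_{\mathrm{loc}}$ (equivalently $\phi \in C^{0,\alpha_0}_{\mathrm{loc}}$), on which the bootstrap is built.

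The heart of the proof is a two-index iteration. At step $k$, I assume $\nabla\phi \in L^{q_k,\lambda_k}_{\mathrm{loc}}$ and $\psi \in L^{t_k,\mu_k}_{\mathrm{loc}}$. Feeding these into $\pd\psi = A\psi + B$, the term $A\psi$ is critical and absorbed by the smallness of $\|A\|_{L^2}$, while $B = -e_\alpha\cdot\nabla\phi\cdot\chi^\alpha$ sits in a Morrey refinement of $L^{pq_k/(p+q_k)}$. Adams-type Riesz-potential estimates then upgrade $\psi$ to new indices $(t_{k+1},\mu_{k+1})$; the Morrey decay is exactly what keeps the iteration from stalling at the Lebesgue threshold, where $A\psi$ alone would kill any improvement. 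Substituting back into the $\phi$-equation, both $|\psi|^4$ and $V$ have improved Morrey regularity, and another application of Rivière's conservation-law estimate produces $(q_{k+1},\lambda_{k+1})$. Analysing this iteration reduces to a fixed-point problem in $(q,t)$; eliminating one exponent between the two improvement relations yields a quadratic in the remaining one whose coefficients depend on $p$, and the theorem's dichotomy falls out of its discriminant. If $p > p_0$, no admissible fixed point exists, so the iteration must run until it leaves the scheme, namely until $\nabla\phi \in L^{p}_{\mathrm{loc}}$ and $\psi \in L^{\infty-o}_{\mathrm{loc}}$; one last direct application of Calderón--Zygmund to $-\Delta\phi$ and of Dirac elliptic regularity to $\pd\psi$ then delivers $\phi\in W^{1,p}_{\mathrm{loc}}$ and $\psi\in W^{1,p/2}_{\mathrm{loc}}$. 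For $4<p\le p_0$, the iteration converges instead to a genuine fixed point $(q_*,t_*)$, and a careful limiting procedure, keeping track of all $\varepsilon$-losses from the Morrey-to-Lebesgue embedding, yields the almost-$L$ regularity of part~(2). The explicit value $p_0 = \tfrac{8}{5}+\tfrac{16}{15}\sqrt{6}$ is precisely the larger root of the quadratic produced by this fixed-point analysis.

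I expect the main obstacle to be the bookkeeping. The bootstrap is genuinely two-dimensional, since the two equations couple through Morrey norms of both unknowns, and one must verify that each step gains strictly whenever we are away from the fixed point, identify the exact threshold at which the fixed-point relation loses admissible solutions, and track all the $\varepsilon$-losses to reach $L^p$ in part~(1) rather than only $L^{p-o}$. A naive Lebesgue-only iteration would stall at once because both $A\psi$ and $\Omega\nabla\phi$ are scale-critical; it is the Morrey refinement, together with the antisymmetry of $\Omega$ that powers Rivière's conservation law, that makes each step genuinely productive.
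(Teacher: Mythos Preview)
Your proposal is essentially correct and follows the same route as the paper: alternate a Rivi\`ere-type estimate on the $\phi$-equation with a Riesz-potential/Morrey estimate on the Dirac equation for $\psi$, then analyze the resulting iteration map on the integrability exponent; its fixed-point equation is quadratic in $q$ and the discriminant vanishes exactly at $p_0$, producing the dichotomy.

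One clarification is worth making. You propose a genuinely two-index iteration tracking Morrey decay exponents $(q_k,\lambda_k)$ and $(t_k,\mu_k)$, and you assert that a Lebesgue-only iteration would stall at once. The paper in fact runs the bootstrap purely in Lebesgue exponents (with $-o$ losses): the Morrey machinery is confined to the \emph{interior} of the preparation lemma for $\psi$, where it is used to pass from $\psi\in L^4=M^{4,2}$ and $B\in L^s$, $s>\tfrac43$, to $\psi\in L^{t-o}$ with $t>4$. What drives the gain is not Morrey decay carried through the iteration but simply that $\chi\in L^p$ with $p>4$ makes $B=\nabla\phi\cdot\chi\in L^{2p/(p+2)}$ already strictly better than $L^{4/3}$ at the very first step; after that each round improves $q$ and $t$ in Lebesgue scale alone until either the barrier $q\ge\tfrac{2p}{p-2}$ is crossed (case $p>p_0$) or the fixed point $q_*$ is reached (case $p\le p_0$). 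Your two-index scheme would of course also work, but it carries more bookkeeping than is needed here.
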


The methods used here are quite typical in the analysis of geometric partial differential equations. As we are dealing with a  critical case for the Sobolev framework, we need  a little Morrey space theory. Then Rivière's regularity theory \cite{riviere2007conservation} and its extensions in e.g. \cite{riviere2008partial, riviere2010conformally, sharp2013decay, sharp2016regularity} enable us to utilize the antisymmetric structure of the equations for $\phi$ to improve the regularity. Using similar methods, regularity results  for weak solutions of the simpler models, namely Dirac-harmonic maps and Dirac-harmonic maps with curvature terms, are achieved in \cite{zhu2009regularity, chen2011boundary, wang2009regularity, branding2015some}. Here in this more general model, the structure of the system is even more complicated because of the divergence terms and the appearance of the gravitinos. In the present work, we obtain regularity results for  weak solutions for the case of coarse gravitinos.

With this result in hand, we turn to the system \eqref{equation for phi'}-\eqref{equation for psi'}. Now we may make use of the concrete expressions of the coefficients $\Omega^{ab}$'s and $A^{ab}$'s. That is, by Theorem \ref{result for abstract system}, $\phi'$ and $\psi'$ now have better integrability properties, hence so do the corresponding $\Omega^{ab}$'s and $A^{ab}$'s. A more precise analysis of these coefficients will then lead to  our main result.
\begin{thm}\label{result for the original system}
 Let $(\phi,\psi)\in \mathcal{X}^{1,2}_{1,4/3}(M,N)$ be a critical point of the action functional $\A$. Suppose the gravitino $\chi\in \Gamma^p(S\otimes TM)$ for some $p\in (4,\infty]$. Then $\phi\in W^{1,p}(M,N)$ and $\psi\in \Gamma^{1,p/2}(S\otimes\phi^*TN)$. In particular, they are H\"older continuous.
\end{thm}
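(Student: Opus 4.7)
The plan is to reduce to the concrete system by the isometric-embedding construction of the excerpt: pass to $(\phi',\psi')=(f\circ\phi,f_{\#}\psi)\in\R^K$ satisfying \eqref{equation for phi'}-\eqref{equation for psi'}, which is a special instance of the abstract system \eqref{equation for phi}-\eqref{equation for psi}, so Theorem~\ref{result for abstract system} applies directly. In Case~(1) of that theorem, $p>p_0$, we immediately obtain $\phi\in W^{1,p}_{loc}$ and $\psi\in W^{1,p/2}_{loc}$, which is the claim. What remains is the sub-critical range Case~(2), $4<p\le p_0$, where Theorem~\ref{result for abstract system} only yields the preliminary regularity $\na\phi'\in L^{q_*-o}_{loc}$ with $q_*>2$ and $\psi'\in L^{t_*-o}_{loc}$ with $t_*>4$; a bootstrap on the concrete system is needed to upgrade this to $\na\phi'\in L^p_{loc}$ and $\psi'\in W^{1,p/2}_{loc}$.

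For the bootstrap I would exploit the explicit expressions of the coefficients in \eqref{equation for phi'}-\eqref{equation for psi'}: $\omega^{ab}$ is linear in $\na\phi'$, $F^{ab}$ bilinear in $\psi'$, and $T^{ab}$, $V'$ bilinear in $\chi$ and $\psi'$, with the remaining source terms being products of $\na\phi'$, $\psi'$ and $\chi$ with smooth coefficients. Given an inductive hypothesis $\na\phi'\in L^q_{loc}$ and $\psi'\in L^t_{loc}$, each RHS term lies in an explicitly computable Lebesgue space: on the Dirac side, $\na\phi'\cdot\psi'\in L^{qt/(q+t)}_{loc}$, $|Q\chi|^2\psi'\in L^{pt/(p+2t)}_{loc}$, the cubic $\psi'^3\in L^{t/3}_{loc}$, and $\na\phi'\cdot\chi\in L^{pq/(p+q)}_{loc}$; on the Laplace side, $\omega\na\phi'\in L^{q/2}_{loc}$, $F\na\phi'\in L^{qt/(2q+t)}_{loc}$, $Z|\psi'|^4\in L^{t/4}_{loc}$, with the divergence term $\diverg V'$, $V'\in L^{pt/(p+t)}_{loc}$, handled via the $W^{-1,r}\to W^{1,r}$ estimate for $\Delta$. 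Calderón-Zygmund theory for $\pd$ and $\Delta$, combined with the 2D Sobolev embedding $W^{1,r}\hookrightarrow L^{2r/(2-r)}$ for $r<2$, then produces strictly improved exponents $(q^\sharp,t^\sharp)$. A direct analysis shows the iteration saturates precisely at $\psi'\in W^{1,p/2}_{loc}$ and $\na\phi'\in L^p_{loc}$: once $\psi'\in W^{1,p/2}_{loc}\hookrightarrow L^\infty_{loc}$ (valid since $p>4$), the $\chi$-dependent terms become dominant and pin the integrabilities at exactly $p/2$ and $p$ respectively. H\"older continuity is then immediate from the 2D Morrey embeddings $W^{1,p}\hookrightarrow C^{0,1-2/p}$ and $W^{1,p/2}\hookrightarrow C^{0,1-4/p}$, both nontrivial because $p>4$.

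The hard part will be the rigorous execution of the bootstrap in Case~(2): the ``almost $L^p$'' formalism causes each iteration to lose an arbitrarily small amount of integrability, and the equations for $\phi'$ and $\psi'$ are tightly coupled via $\na\phi'\cdot\psi'$ and $\na\phi'\cdot\chi$, so the gains in $q$ and $t$ must be tracked simultaneously. One must verify that only finitely many steps are needed to reach any pair $(q,t)$ strictly below the endpoints, and separately that the cumulative loss along the iteration does not prevent reaching the actual endpoint integrabilities $L^p$ and $W^{1,p/2}$ (as opposed to merely $L^{p-o}$ and $W^{1,(p/2)-o}$). A further technical point is to propagate the small-energy hypothesis of Theorem~\ref{result for abstract system} through each iteration, and to ensure that the divergence term $\diverg V'$ produces a genuine $W^{1,r}_{loc}$ gain for $\phi'$ at every stage rather than only a weaker Sobolev-negative-space estimate.
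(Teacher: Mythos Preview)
Your proposal is correct and follows the paper's overall strategy: reduce via the isometric embedding to \eqref{equation for phi'}--\eqref{equation for psi'}, invoke Theorem~\ref{result for abstract system} (so only Case~(2), $4<p\le p_0$, needs further work), and then bootstrap on the concrete system using the explicit form of the coefficients. The difference lies in how the $\phi'$ equation is handled during the bootstrap. The paper continues to apply Lemma~\ref{preparation lemma for phi}: it keeps the full antisymmetric $\Omega$ on the left (only using $\Omega\in L^2$ small), notes that the worst coefficient on the $\psi'$ side is $|Q\chi|^2\in L^{p/2}$, and obtains a clean arithmetic iteration $\tfrac{1}{t_{k+1}}=\tfrac{1}{t_k}-(\tfrac12-\tfrac2p)$, $\tfrac{1}{q_{k+1}}=\tfrac{1}{q_k}-(\tfrac12-\tfrac2p)$ with a fixed positive gain, so finiteness is immediate. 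You instead treat $\omega\na\phi'$ as a source in $L^{q/2}$ (legitimate once $q>2$) and apply straight Calder\'on--Zygmund; this avoids any further use of Rivi\`ere/Sharp--Topping but forces you to track which of several competing exponents is dominant at each stage. Both routes terminate at the same place.

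Two small corrections to your list of concerns. First, there is nothing to ``propagate'' regarding small energy: your direct Calder\'on--Zygmund bootstrap needs no smallness, and the smallness of $\|\Omega\|_{L^2}$ required for Theorem~\ref{result for abstract system} is a one-time local hypothesis obtained by shrinking the ball. Second, the endpoint issue (reaching $L^p$ and $W^{1,p/2}$ rather than the ``$-o$'' versions) is resolved exactly as in the paper's proof of Theorem~\ref{result for abstract system}: once some iterate gives $\psi'\in W^{1,r}$ with $r>2$ (which happens in finitely many steps even with the $-o$ losses, since $r-o>2$ still), the Morrey embedding yields $\psi'\in C^0\subset L^\infty_{loc}$ exactly, and one further pass through both equations then gives the sharp exponents.
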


The article is organized as follows. We first prepare some lemmata to handle the equations for $\psi$ and $\phi$ separately. Then we can use an iteration procedure to improve the regularity of the solutions to the system \eqref{equation for phi}-\eqref{equation for psi} step by step. One can directly start from the section of iterations, skipping the two sections in which the lemmata are prepared, and refer to it back when necessary. In the final section we analyze the original system \eqref{equation for phi'}-\eqref{equation for psi'} and prove Theorem \ref{result for the original system}. Unlike many other problems where the coupling of variables causes additional problems, here the coupling behavior helps to achieve our goals.

Before start we would like to express our thanks to  Marius Yamakou for producing the nice graphs with MATLAB.

\section{Preparation Lemma for Spinor Components}

In this section, we first handle the more general Dirac type equation \eqref{equation for psi} for $\psi$, and show that the integrability of $\psi$ can be improved by using an estimate of the Riesz potentials. We start with a general dimension $m\ge 2$. Then the system \eqref{equation for psi} is located on $B_1(0)\subset \R^m$. Note that the Dirac operator here is Euclidean, which has an explicit fundamental solution (see e.g. \cite{ammann2003variational})
\begin{equation}
 \na G(x,y)=\frac{1}{m\omega_m} \frac{x-y}{|x-y|^m},
\end{equation}
where $G(x,y)$ is the fundamental solution for the Euclidean Laplacian operator on~$\R^m$ and~$\omega_m=|B_1(0)|$. Convolutions with $\na G$ can be controlled by the Riesz potential operator $I_1$ on~$\R^m$, which is defined on measurable functions via
\begin{equation}
 I_1(u)=\int_{\R^m} \frac{1}{|x-y|^{m-1}}u(y)\dd y.
\end{equation}
In \cite{adams1975note} a good estimate about the Riesz potential operator has been given. They combine in an indirect way to improve the integrability of solutions to \eqref{equation for psi}. Later we will concentrate on the two-dimensional case because we are mostly concerned with a Riemann surface.

Since there are different notational conventions on Morrey spaces, we need to make our conventions explicit.  Let $U\subset \R^m$ be a domain. For $0\le \lambda\le m$ and $1\le p<\infty$, the Morrey spaces on $U$ are defined as
\begin{equation}
 \MS{p,\lambda}(U)\coloneqq \left\{u\in L^p(U)\big| \|u\|_{\MS{p,\lambda}(U)} <\infty\right\},
\end{equation}
where
\begin{equation}
 \|u\|_{\MS{p,\lambda}(U)}\coloneqq \sup_{x\in U, r>0} \left(\frac{r^\lambda}{r^m}\int_{B_r(x)\cap U} |u(y)|^p\dd y\right)^{1/p}.
\end{equation}
One can verify that on a bounded domain $U$ the following inclusions hold: for any $p\in [1,\infty)$ and any $\lambda\in [1,m]$,
\begin{equation}
 L^\infty(U)=\MS{p,0}(U)\subset \MS{p,\lambda}(U)\subset \MS{p,m}(U)=L^p(U).
\end{equation}
In particular, when $m=2$, one has $\MS{4,2}(U)=L^4(U)$ and $\MS{2,2}(U)=L^2(U)$. Recall that~$\psi\in W^{1,4/3}(B_1)$, which means, by Sobolev embedding, that $\psi\in \MS{4,2}(B_1)$. For further properties of Morrey spaces we refer to \cite{giaquinta1983multiple}. In \cite{adams1975note} it is shown that for any $1<q<\lambda\le m$,
\begin{equation}\label{admas inequality}
 I_1\colon \MS{q,\lambda}(U)\to \MS{\frac{\lambda q}{\lambda-q},\lambda}(U)
\end{equation}
is a bounded linear operator.

\begin{lemma}\label{preparation lemma for psi}
 Let $m\ge 2$ and $\frac{4}{3}< s\leq2$. Suppose $\vph\in\MS{4,2}(B_1(0), \R^L\otimes \R^K)$ be a weak solution of the system
 \begin{equation}\label{equation for vph}
  \pd\vph^i=A^i_j \vph^j+B^i, \quad 1\le i\le K,
 \end{equation}
 where $B_1\equiv B_1(0)$ denotes the unit open ball in $\R^m$, $A\in \MS{2,2}(B_1,\mathfrak{gl}(\R^L\otimes\R^K))$ and $B\in \MS{s,2}(B_1, \R^L\otimes\R^K)$. Then there exists an $\vep_0=\vep_0(m,s,t)>0$ such that if
 \begin{equation}
  \|A\|_{\MS{2,2}(B_1)} \le \vep_0,
 \end{equation}
 then $\vph\in L^{t}_{loc}(B_1)$ for $4\le t< 4+\frac{4}{3}\frac{3s-4}{2-s}=\frac{8}{6-3s}$. Moreover, for any domain $U\Subset B_1$ (which means $U\subset\overline{U}\subset B_1$),
 \begin{equation}\label{local estimate for vph}
  \|\vph\|_{L^t(U)}\le C\left(\|\vph\|_{\MS{4,2}(B_1)}+\|B\|_{\MS{s,2}(B_1)}\right).
 \end{equation}
for some $C=C(U,m,s,t)>0$.
\end{lemma}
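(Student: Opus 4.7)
The plan is to express $\varphi$ (times a cutoff) as a sum of Riesz potentials via the fundamental solution of the Euclidean Dirac operator, and then to apply Adams' Morrey-space mapping $I_1\colon \MS{r,2} \to \MS{2r/(2-r),2}$ together with the smallness of $\|A\|_{\MS{2,2}}$ to absorb the linear $A\varphi$ contribution.

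Concretely, fix a smooth cutoff $\eta$ with $\eta\equiv 1$ on $U$ and compactly supported in a slightly larger set $V\Subset B_1$. Extending $\eta\varphi$ by zero to $\R^m$ and convolving with the Dirac Green's function (whose kernel is bounded by $C\,|x-y|^{1-m}$), one obtains the pointwise bound
\begin{equation*}
|\eta\varphi(x)| \le C\,I_1\bigl(|\pd\eta|\,|\varphi|\bigr)(x) + C\,I_1\bigl(\eta\,|A|\,|\varphi|\bigr)(x) + C\,I_1\bigl(\eta\,|B|\bigr)(x).
\end{equation*}
The first term is harmless on $U$: since $\operatorname{supp}(\pd\eta)$ is separated from $x\in U$, it is dominated pointwise by $C\,\|\varphi\|_{L^1(B_1)}$. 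For the $B$-term, Adams' inequality immediately gives $I_1(\eta B)\in\MS{2s/(2-s),\,2}$ with norm controlled by $\|B\|_{\MS{s,2}}$; this is the principal source of gained integrability over the starting space $\MS{4,2}$.

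The crucial step is the middle term. Provisionally assuming $\varphi\in\MS{q,2}$ on the slightly enlarged domain $V$, Morrey--H\"older gives $\eta A\varphi\in\MS{2q/(q+2),\,2}$ (using $1/2+1/q=(q+2)/(2q)$), and since $2q/(q+2)<2$, a second application of Adams yields $I_1(\eta A\varphi)\in \MS{q,2}$ with
\begin{equation*}
\|I_1(\eta A\varphi)\|_{\MS{q,2}} \le C(q,m)\,\|A\|_{\MS{2,2}}\,\|\varphi\|_{\MS{q,2}(V)}.
\end{equation*}
Thus the operator $T_A\colon \psi\mapsto I_1(\eta A\psi)$ on $\MS{q,2}$ has operator norm at most $C(q,m)\,\vep_0$. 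Choosing $\vep_0=\vep_0(m,s,t)$ so that $C(q,m)\vep_0<\tfrac12$, the operator $I-T_A$ becomes invertible by a Neumann series. Rearranging the representation formula as $(I-T_A)(\eta\varphi)=I_1(\eta B)+I_1(|\pd\eta|\,|\varphi|)+(\text{bounded})$ and inverting delivers a $\MS{q,2}$-bound on $\eta\varphi$, and the embedding $\MS{q,2}(U)\hookrightarrow L^q(U)$ produces the $L^t$ estimate of the lemma.

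The main obstacle is that $\varphi$ is a priori given only in $\MS{4,2}$, not in the target space $\MS{q,2}$ with $q>4$, so the Neumann series must be combined with a bootstrap. One way is to work on a nested sequence of subdomains $U\Subset U_n\Subset \cdots \Subset U_0=B_1$, upgrading the Morrey integrability of $\varphi$ from $\MS{4,2}$ to $\MS{q,2}$ step by step; alternatively, one may appeal to uniqueness within $\MS{4,2}$ to identify $\eta\varphi$ with the fixed point of $I-T_A$ found in the smaller space $\MS{q,2}$. Because the Adams constant $C(q,m)$ and the Morrey--H\"older interaction degrade as $q$ grows, matching this deterioration against the fixed smallness bound $\vep_0$ is what caps the achievable exponent at $t<8/(6-3s)$, rather than letting $t$ reach the naive endpoint $2s/(2-s)$.
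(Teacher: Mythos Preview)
Your route differs from the paper's. The paper does not invert $I-T_A$; on each small ball $B_R(x_0)$ it writes $\varphi=g+h$ with $g=\nabla G*\bigl(\eta^2(A\varphi+B)\bigr)$ bounded via Adams' inequality and $h=\varphi-g$ harmonic, enjoying the scale decay $\|h\|_{M^{4,2}(B_{\theta R})}\le (4\theta)^{1/2}\|h\|_{M^{4,2}(B_{R/2})}$. Smallness of $\|A\|_{M^{2,2}}$ absorbs the $g$-piece, and iteration in the radius first improves the \emph{second} Morrey index, giving $\varphi\in M^{4,\,2-4\beta}$ for every $\beta<2/s^*$ (where $1/s^*=3/4-1/s$); a second pass with Adams at $\lambda=2-\tfrac{4\beta}{3}$ then raises the Lebesgue index to $\tfrac{4(3-2\beta)}{3-6\beta}$, whose limit as $\beta\nearrow 2/s^*$ is exactly $8/(6-3s)$. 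So in the paper the threshold is an artefact of this specific two-step scheme.

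Your argument, as written, has real gaps. The identity you want to invert must use the genuine convolution $\nabla G*$, not $I_1$: the Riesz potential is only a pointwise majorant of $|\nabla G*\,\cdot\,|$ and yields an inequality, not a linear equation that can be rearranged as $(I-T_A)(\eta\varphi)=F$. Your ``nested subdomain'' bootstrap cannot gain integrability on its own, because $T_A$ maps each $M^{q,2}$ to itself and never to a better space; starting from $M^{4,2}$ you stay in $M^{4,2}$ at every stage. The uniqueness alternative is the viable one, but it needs the missing step that the Neumann fixed point found in $M^{q,2}$ also lies in $M^{4,2}$ (since $T_A$ contracts on both spaces and the forcing $\nabla G*(\eta B)+\nabla G*((\nabla\eta)\cdot\varphi)$ lies in $M^{4,2}\cap M^{q,2}$), so that contraction uniqueness in $M^{4,2}$ identifies it with $\eta\varphi$. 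Finally, your explanation of the cap is wrong: $\varepsilon_0$ is allowed to depend on $t$, so the growth of Adams' constant as $q\to\infty$ is harmless, and in fact your approach, once repaired, delivers every $t<2s/(2-s)$---strictly more than the lemma claims, not less.
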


We remark that $\R^L\otimes\R^K$ represents the typical fiber of a twisted spinor bundle over the~$m$-dimensional unit ball $B_1$, which is trivial. By this lemma we see that, as long as $B$ in \eqref{equation for vph} has better regularity than $\MS{\frac{4}{3},2}$, the integrability of $\vph$ can be improved. Arguments of this type have been used to show the regularities for Dirac type equations in various contexts, see e.g. \cite{wang2010remark} in dimension $m\geq2$ and see e.g. \cite{sharp2016regularity, branding2015some} in dimension $m=2$. The above result improves that in Lemma 6.1 in \cite{jost2016regularity}, where the case of $s=2$ was done and we include the sketch of the proof here only for the convenience of readers.

\begin{proof}
 Since the case $s=2$ has been shown in \cite[Lemma 6.1]{jost2016regularity}, here we consider $s\in(\frac{4}{3},2)$.

 Let $x_0\in B_1$ and $0<R<1-|x_0|$. Take a cutoff function $\eta\in C^\infty_0(B_R(x_0))$ such that $0\le \eta\le 1$ and $\eta\equiv 1$ on $B_{R/2}(x_0)$. Then for each $1\le i\le K$, set
 \begin{equation}
  g^i(x)\coloneqq \na G *\left(\eta^2(A^i_j \vph^j+B^i)\right)(x)
  =\int_{\R^m} \frac{\p G(x,y)}{\p y^\al} \frac{\p}{\p y^\al} \cdot\left(\eta^2(A^i_j \vph^j+B^i)\right)(y)\dd y.
 \end{equation}
 Then
 \begin{equation}
  \pd g^i=\eta^2(A^i_j \vph^j+B^i),
 \end{equation}
 and in particular, $\pd g^i=\pd \vph^i$ on $B_{R/2}(x_0)$. Thus each
 \begin{equation}
  h^i\coloneqq \vph^i-g^i
 \end{equation}
 is harmonic in $B_{R/2}(x_0)$. Meanwhile $g^i$'s can also be controlled in the aforementioned way
 \begin{equation}
  |g^i|\le C\int_{\R^m} \frac{1}{|x-y|^{m-1}}\left|\eta^2(A^i_j \vph^j+B^i)\right|\dd y
        \le C I_1\left(\eta^2(A^i_j \vph^j+B^i)\right).
 \end{equation}
 Then, noting that
 \begin{equation}
  \|\eta^2 B\|_{\MS{\frac{4}{3},2}(\R^m)}
          \le \|B\|_{\MS{\frac{4}{3},2}(B_R(x_0))}
          \le {\omega_n}^{\frac{1}{s^*}} R^{\frac{2}{s^*}}\|B\|_{\MS{s,2}(B_R(x_0))}
 \end{equation}
 with $s^*>4$ satisfies $\frac{3}{4}=\frac{1}{s}+\frac{1}{s^*}$ and using \eqref{admas inequality} with $q=\frac{4}{3}$ and $\lambda=2$, one gets
 \begin{equation}
  \begin{split}
   \|g\|_{\MS{4,2}(\R^m)}
    &\le C\|I_1\left(\eta^2(A\vph+B)\right)\|_{\MS{4,2}(\R^m)}\le C\|\eta^2(A\vph+B)\|_{\MS{\frac{4}{3},2}(\R^m)} \\
    &\le C\|\eta A\|_{\MS{2,2}(\R^m)}\|\eta \vph\|_{\MS{4,2}(\R^m)} + C\|\eta^2 B\|_{\MS{\frac{4}{3},2}(\R^m)} \\
    &\le C\vep_0\|\vph\|_{\MS{4,2}(B_R(x_0))}+ C R^{\frac{2}{s^*}}|B|,
  \end{split}
 \end{equation}
where $|B|\equiv \|B\|_{\MS{s,2}(B_1)}$. As each $h^i$ is harmonic in $B_{R/2}(x_0)$, it follows that for any $\theta\in (0,1/6)$,
 \begin{equation}
  \|h^i\|_{\MS{4,2}(B_{\theta R}(x_0))} \le (4\theta)^{1/2}\|h^i\|_{\MS{4,2}(B_{R/2}(x_0))}.
 \end{equation}
 Hence, recalling $\vph=g+h$, one has
 \begin{equation}
  \|\vph\|_{\MS{4,2}(B_{\theta R}(x_0))}
  \le C_0 (\vep_0+\theta^{1/2})\|\vph\|_{\MS{4,2}(B_R(x_0))} + C_1|B|R^{2/s^*}.
 \end{equation}
 Fix any $\be\in (0,\frac{2}{s^*})$. Then there is a $\theta\in (0,\frac{1}{6})$ such that $2C_0\theta^{1/2}\le \theta^\be$. Then take $\vep_0>0$ small enough such that $2C_0\vep_0\le \theta^\be$. With such a choice one has
 \begin{equation}
  \|\vph\|_{\MS{4,2}(B_{\theta R}(x_0))} \le \theta^\be\|\vph\|_{\MS{4,2}(B_R(x_0))} +C_1|B|R^{2/s^*}.
 \end{equation}
 Then, by a standard iteration argument, one can show that, for any $0<r<R<1-|x_0|$, it always holds that
 \begin{equation}
  \|\vph\|_{\MS{4,2}(B_r(x_0))}
  \le \frac{1}{\theta^\be}\left(\frac{r}{R}\right)^\be\|\vph\|_{\MS{4,2}(B_R(x_0))}
      +\frac{C_1|B|}{\theta^{2\be}-\theta^{\frac{2}{s^*}+\be}} r^\be
 \end{equation}
 which in turn implies that
 \begin{equation}
  \left(\frac{1}{r^{m-2+4\be}}\int_{B_r(x_0)} |\vph|^4 \dd y\right)^{\frac{1}{4}}
  \le \frac{1}{(\theta R)^\be} \|\vph\|_{\MS{4,2}(B_1)} +\frac{C_1 |B|}{\theta^{2\be}-\theta^{\frac{2}{s^*}+\be}}.
 \end{equation}
 Therefore, taking $|x_0|<\frac{1}{4}$ and $R=\frac{1}{2}$, one sees $\vph\in \MS{4,2-4\be}(B_{1/4})$ for any $\be \in (0,\frac{2}{s^*})$ with
 \begin{equation}
  \|\vph\|_{\MS{4,2-4\be}(B_{1/4})}
  \le C(m,\be) \left(\|\vph\|_{\MS{4,2}(B_1)}+\|B\|_{\MS{s,2}(B_1)}\right).
 \end{equation}

 Next we improve the integrability. As before for any $x_1\in B_{1/4}$ and any $0<R<\frac{1}{4}-|x_1|$, take a cutoff function $\eta\in C^\infty_0(B_R(x_1))$ and define $g^i$ and $h^i$ in the same way. This time with~$q=\frac{4}{3}$  and~$\lambda=2-\frac{4\be}{3}$, one has
 \begin{equation}
  \begin{split}
   \|g\|_{\MS{\frac{4(3-2\be)}{3-6\be}, 2-\frac{4\be}{3}}(\R^m)}
   &\le C\|I_1\left(\eta^2(A\vph+B)\right)\|_{\MS{\frac{4(3-2\be)}{3-6\be}, 2-\frac{4\be}{3}}(\R^m)}
     \le C\|\eta^2(A\vph+B)\|_{\MS{\frac{4}{3},2-\frac{4\be}{3}}(\R^m)} \\
   &\le C\|\eta A\|_{\MS{2,2}(\R^m)} \|\eta\vph\|_{\MS{4,2-4\be}(\R^m)}
        +C\|\eta^2 B\|_{\MS{\frac{4}{3},2-\frac{4\be}{3}}(\R^m)}  \\
   &\le C \vep_0\|\vph\|_{\MS{4,2-4\be}(B_1)}+ C\|B\|_{\MS{s,2}(B_1)} R^{\frac{2}{s^*}-\be}.
  \end{split}
 \end{equation}
 Since the harmonic part $h$ is smooth in $B_{R/2}(x_1)$, it behaves nicely with respect to all Morrey norms in an interior domain. In particular one can get
 \begin{equation}
  \|h\|_{\MS{\frac{4(3-2\be)}{3-6\be}, 2-\frac{4\be}{3}}(B_{R/3}(x_1))}
  \le C \left(\|\vph\|_{\MS{4,2}(B_1)}+\|B\|_{\MS{s,2}(B_1)}\right).
 \end{equation}
 Therefore, $\vph=g+h$ can be estimated by
 \begin{equation}
  \|\vph\|_{\MS{\frac{4(3-2\be)}{3-6\be}, 2-\frac{4\be}{3}}(B_{1/16})}
  \le C(n,\be)\left(\|\vph\|_{\MS{4,2}(B_1)}+\|B\|_{\MS{s,2}(B_1)}\right).
 \end{equation}
 Recall that $\be$ can be arbitrarily chosen in $(0,\frac{2}{s^*})$. Since
 \begin{equation}
  \lim_{\be\nearrow\frac{2}{s^*}} \frac{4(3-2\be)}{3-6\be}=4+\frac{4}{3}\frac{8}{s^*-4}=4+\frac{4}{3}\frac{3s-4}{2-s},
 \end{equation}
 and
 \begin{equation}
  \MS{\frac{4(3-2\be)}{3-6\be}, 2-\frac{4\be}{3}}(B_{1/16})\hookrightarrow
     L^{\frac{4(3-2\be)}{3-6\be}}(B_{1/16}),
 \end{equation}
 one concludes that $\vph\in L^t(B_{1/16})$ for any $t<4+\frac{4}{3}\frac{3s-4}{2-s}$. The desired estimate \eqref{local estimate for vph} also follows in a standard way. For details of the above argument one can consult \cite{jost2016regularity}. This completes the proof.

\end{proof}

In our case, we have $m=2$, since $B^a=-e_\al\cdot \na\phi^a\cdot \chi^\al\in L^{\frac{2p}{2+p}}(B_1)$, so $s_0=\frac{2p}{2+p}\in (\frac{4}{3},2)$. By Lemma \ref{preparation lemma for psi} we immediately get $\psi\in L^{t_1-o}_{loc}(B_1)$ with $t_1\equiv \frac{8}{6-3s_0}=\frac{2}{3}(p+2)$. Note that $t_1>4$ whenever $p>4$, so the integrability of $\psi$ is improved, although only by a little. Moreover, for any $t<t_1$ and any $U\Subset B_1$, we have the estimate
\begin{equation}\label{first improvement for psi}
 \|\psi\|_{L^t(U)} \le C(U,p,t)\left(\|\psi\|_{L^4(B_1)}+\|\na \phi\|_{L^2(B_1)}\|\chi\|_{L^p(B_1)}\right).
\end{equation}
We point out that the above argument doesn't work when $p=4$. This is a crucial issue.

\section{Preparation Lemma for Map Components}

Now the equations \eqref{equation for phi} for $\phi$ are almost away from being critical, and we will show that the map has better regularity than $W^{1,2}(B_1,\R^K)$. Note that~$\Omega^{ab}\na\phi^b\in L^1(B_1)$ and~$\diverg V^a\in W^{-1,2}(B_1)$, and both of them may cause trouble. The following lemma, which is a combination of Campanato regularity theory and Rivière's regularity theory, will be useful for handling these problems.

\begin{lemma}\label{preparation lemma for phi}
 Let $p,t \in (4,\infty]$. Suppose that $u=(u^1,\cdots, u^K)\in W^{1,2}(B_1,\R^K)$ solves the following system
 \begin{equation}
  -\Delta u^a=\Omega^{ab}\na u^b +f^a+\diverg V^a, \quad 1\le a\le K,
 \end{equation}
 where $\Omega\in L^{2}(B_1, \mathfrak{so}(K)\otimes\R^2)$, $f\in L^{t/4}(B_1,\R^K)$ and $V\in L^{\frac{pt}{p+t}}(B_1,\R^K\otimes \R^2)$. Then there exists an $\vep_1=\vep_1(p,t,K)>0$ such that if $\|\Omega\|_{L^2(B_1)}\le \vep_1$, then~$u\in W^{1,\frac{2\sigma}{2-\sigma}}_{loc}(B_1,\R^K)$, where~$\sigma=\frac{2pt}{2(p+t)+pt}\wedge \frac{t}{4}$, and for any $U\Subset B_1$,
 \begin{equation}\label{local estimate for u}
  \|u\|_{W^{1,\frac{2\sigma}{2-\sigma}}(u)} \le C\left(\|u\|_{L^2(B_1)} +\|f\|_{L^{\frac{t}{4}}(B_1)} +\|V\|_{L^{\frac{pt}{p+t}}(B_1)}\right)
 \end{equation}
 for some constant $C=C(U,p,t,K)>0$.
\end{lemma}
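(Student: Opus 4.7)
My plan is to apply Rivi\`ere's gauge-change theorem to put the antisymmetric term in conservation form, then combine a Hodge decomposition with Calder\'on--Zygmund and Wente-type Hardy-space estimates, absorbing the residual feedback by a smallness bootstrap. First, because $\|\Omega\|_{L^2(B_1)}\le\vep_1$ is small, Rivi\`ere's theorem produces $A\in W^{1,2}\cap L^\infty(B_1, GL_K)$ with $A, A^{-1}$ uniformly close to $\mathrm{Id}$, together with $B\in W^{1,2}(B_1,\mathfrak{so}(K))$, satisfying the gauge identity $\nabla A-A\Omega=\nabla^\perp B$ and the quantitative bound $\|A-\mathrm{Id}\|_{W^{1,2}\cap L^\infty}+\|B\|_{W^{1,2}}\le C\vep_1$.

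Next, multiplying the equation by $A$ from the left and using the gauge identity together with $A\diverg V=\diverg(AV)-\nabla A\cdot V$ gives the inhomogeneous conservation law
\begin{equation*}
\diverg\bigl(A\nabla u+B\nabla^\perp u+AV\bigr)=-Af+\nabla A\cdot V\eqqcolon w.
\end{equation*}
By H\"older, $Af\in L^{t/4}$ and $\nabla A\cdot V\in L^{2pt/(2(p+t)+pt)}$, so $w\in L^\sigma_{loc}(B_1)$ with $\sigma$ as in the statement; a direct check gives $\sigma\in(1,2)$ when $p,t>4$. Fix $B_r\Subset B_1$ and let $\Phi$ solve the Dirichlet problem $\Delta\Phi=w$ on $B_r$. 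By Calder\'on--Zygmund and the 2D Sobolev embedding $W^{2,\sigma}\hookrightarrow W^{1,\frac{2\sigma}{2-\sigma}}$, one has $\nabla\Phi\in L^{\frac{2\sigma}{2-\sigma}}(B_r)$ with the expected bound. Setting $Z:=A\nabla u+B\nabla^\perp u+AV-\nabla\Phi$, $Z$ is divergence-free on $B_r$, hence $Z=\nabla^\perp D$ for some potential $D$.

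I then estimate $D$ through $\Delta D=\operatorname{curl}(A\nabla u+B\nabla^\perp u+AV)$. Expanding the curl, substituting $\Delta u=-\Omega\nabla u-f-\diverg V$, and using the algebraic identity $\nabla^\perp A+\nabla B=A\Omega^\perp$ (an immediate componentwise consequence of the gauge relation) together with $A\Omega=\nabla A-\nabla^\perp B$, the quadratic-in-$\nabla u$ contribution reorganises as a Jacobian $\det(\nabla A,\nabla u)$ plus a remainder $(\nabla B-B\Omega)\cdot\nabla u$. The Jacobian lies in the local Hardy space $\mathcal{H}^1$ by the Coifman--Lions--Meyer--Semmes theorem, while the remaining quadratic piece, together with the lower-order inhomogeneous terms $Bf$, $\diverg(BV)-\nabla B\cdot V$, $\nabla^\perp A\cdot V$, $A\operatorname{curl} V$, is controlled in suitable Lebesgue and negative Sobolev spaces. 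Combining Hardy-space and Calder\'on--Zygmund regularity then yields $\nabla D\in L^{\frac{2\sigma}{2-\sigma}}(B_{r/2})$. From $A\nabla u=\nabla\Phi+\nabla^\perp D-B\nabla^\perp u-AV$ and invertibility of $A$, I get
\begin{equation*}
\nabla u=A^{-1}\nabla\Phi+A^{-1}\nabla^\perp D-A^{-1}B\nabla^\perp u-V,
\end{equation*}
whose first, second and fourth terms already lie in $L^{\frac{2\sigma}{2-\sigma}}(B_{r/2})$, and the feedback term $A^{-1}B\nabla^\perp u$ is absorbed by a short bootstrap using the smallness of $\|A^{-1}B\|$, giving \eqref{local estimate for u}.

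The principal obstacle is the curl analysis of $Z$: a brute-force expansion leaves terms such as $\nabla A\cdot\nabla u$ and $B\Omega\nabla u$ only a priori in $L^1$ (or slightly worse), which is too weak to recover $W^{1,\frac{2\sigma}{2-\sigma}}$ for $D$ and to close the bootstrap. Recognising, via the Rivi\`ere identity, the hidden Jacobian/Hardy-space structure in these quadratic pieces is what makes the estimate close; the antisymmetry of $\Omega$ is what enables this cancellation, while the smallness $\vep_1$ is essential both for the existence of the gauge and for absorbing the $B\nabla^\perp u$ feedback in the final step.
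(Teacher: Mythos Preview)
Your approach takes a substantially different route from the paper. The paper first splits $u=v+w$ where $v\in W^{1,2}_0(B_1)$ solves $-\Delta v=\diverg V$; Calder\'on--Zygmund theory immediately gives $\nabla v\in L^{\frac{pt}{p+t}}(B_1)$. The remaining piece $w$ then satisfies $-\Delta w=\Omega\nabla w+(\Omega\nabla v+f)$, a Rivi\`ere-type system with inhomogeneity $\Omega\nabla v+f\in L^\sigma(B_1)$ and \emph{no} divergence term, so the Sharp--Topping decay theorem applies as a black box and yields $w\in W^{2,\sigma}_{loc}$. This is short and avoids reopening the internals of the Rivi\`ere machinery.

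Your direct gauge-and-Hodge approach has a genuine gap at the step ``Hardy-space and Calder\'on--Zygmund regularity then yields $\nabla D\in L^{\frac{2\sigma}{2-\sigma}}$''. The Jacobian $\det(\nabla A,\nabla u)\in\mathcal{H}^1_{loc}$ by CLMS, but $\mathcal{H}^1$ data for $\Delta D$ only gives $D\in W^{2,1}_{loc}$, hence $\nabla D\in L^2_{loc}$---not $L^{\frac{2\sigma}{2-\sigma}}$, which is strictly larger than $2$ for $\sigma>1$. Your remainder $(\nabla B-B\Omega)\cdot\nabla u$ is a priori only in $L^1$ (since $\nabla B$, $\Omega$, $\nabla u$ are each merely $L^2$) and carries no Jacobian structure, so it contributes nothing better than $\nabla D\in L^{2,\infty}$. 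Thus from $\Delta D\in\mathcal{H}^1+L^1+L^\sigma$ you can only conclude $\nabla D\in L^2_{loc}$, and the final identity $\nabla u=A^{-1}\nabla\Phi+A^{-1}\nabla^\perp D-A^{-1}B\nabla^\perp u-V$ returns you to $\nabla u\in L^2_{loc}$, closing at the starting regularity rather than improving it. Your ``short bootstrap'' cannot repair this: $\nabla A$ stays in $L^2$ regardless of how $\nabla u$ improves, so the Jacobian never leaves $\mathcal{H}^1$. What is missing is precisely the Morrey-space decay mechanism that Sharp--Topping provides; you could insert that, but then you are essentially reproving their theorem, whereas the paper's decomposition isolates $\diverg V$ first and then invokes Sharp--Topping directly.
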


\emph{Remark.}
Note that here $B_1$ is the unit open disk in $\R^2$. For two real numbers $x,y\in \R$, we have used the notation
\begin{equation}
 x\wedge y =\min \{x,y\}.
\end{equation}
Moreover, when $t=\infty$, then $\sigma=\frac{2p}{2+p}$, and the lemma says that $u\in W^{1,p}_{loc}(B_1)$.

\begin{proof}
 Decompose $u=v+w$ where $v\in W^{1,2}_0 (B_1)$ is the solution of
 \begin{equation}
  \begin{cases}
   -\Delta v=\diverg V, & \textnormal{in } B_1 \\
   v=0, & \textnormal{on } \p B_1.
  \end{cases}
 \end{equation}
 The existence and uniqueness are ensured by \cite[Chap. 8]{chen1998elliptic}. By Campanato space theory, we know that~$\na v\in L^{\frac{pt}{p+t}}(B_1)$ and
 \begin{equation}
  \|\na v\|_{L^{\frac{pt}{p+t}}(B_1)}\le C\|V\|_{L^{\frac{pt}{p+t}}(B_1)}
 \end{equation}
 for some $C=C(\frac{pt}{p+t})$. Note that $\frac{pt}{p+t}>2$ since $p,t>4$. It then follows from Poincar\'e's inequality that
 \begin{equation}
  \|v\|_{W^{1,\frac{pt}{p+t}}(B_1)}\le C\|V\|_{L^{\frac{pt}{p+t}}(B_1)}.
 \end{equation}
 On the other hand, $w\in W^{1,2}(B_1)$ satisfies
 \begin{equation}
  \begin{cases}
   -\Delta w=\Omega \na w+\Omega\na v+f, & \textnormal{in } B_1,\\
   w=u, &\textnormal{on } \p B_1.
  \end{cases}
 \end{equation}
 Now we know that $\Omega\na v\in L^{\frac{2pt}{2(p+t)+pt}}(B_1)$ and $f\in L^{\frac{t}{4}}(B_1)$. Set $\sigma$ to be the smaller one of the two, that is,
 \begin{equation}
  \sigma\coloneqq \frac{2pt}{2(p+t)+pt}\wedge \frac{t}{4}
  =\begin{cases}
    \frac{2pt}{2(p+t)+pt}, & \textnormal{if  } \frac{6p}{2+p}\le t; \\
    \frac{t}{4} , & \textnormal{if  } \frac{6p}{2+p}\ge t.
   \end{cases}
 \end{equation}
 Then $1<\sigma<2$ and $\Omega\na v+f\in L^\sigma (B_1)$. At this stage we can use \cite[Theorem 1.1]{sharp2013decay} to conclude that as long as $\|\Omega\|_{L^2}\le \vep_1(p,t,K)$ is small enough, one has $w\in W^{2,\sigma}_{loc}(B_1)$ and for any $U\Subset B_1$,
 \begin{equation}
  \begin{split}
   \|w\|_{W^{2,\sigma}(U)}
   &\le C\left(\|w\|_{L^1(B_1)}+\|\Omega\na v+f\|_{L^\sigma(B_1)}\right) \\
   &\le C\left(\|u\|_{L^2(B_1)}+\|v\|_{W^{1,\frac{pt}{p+t}}(B_1)}+\|f\|_{L^{\frac{t}{4}}(B_1)}\right) \\
   &\le C\left(\|u\|_{L^2(B_1)}+\|V\|_{L^{\frac{pt}{p+t}}(B_1)}+\|f\|_{L^{\frac{t}{4}}(B_1)}\right),
  \end{split}
 \end{equation}
 for some $C=C(U,p,t,K)>0$. The Sobolev embedding says that
 \begin{equation}
  W^{2,\sigma}(U)\hookrightarrow W^{1,\frac{2\sigma}{2-\sigma}}(U)
  =\begin{cases}
    W^{1,\frac{pt}{p+t}}(U), & \textnormal{if  } \frac{6p}{2+p}\le t; \\
    W^{1,\frac{2t}{8-t}}(U), & \textnormal{if  } \frac{6p}{2+p}\ge t.
   \end{cases}
 \end{equation}
 Therefore, if $\frac{6p}{p+t}\le t$, then $v,w\in W^{1,\frac{pt}{p+t}}_{loc}(B_1)$, and so is $u=v+w$; and if $\frac{6p}{2+p}\ge t$, since in this case $\frac{2t}{8-t}\le \frac{pt}{p+t}$, we then have
 \begin{equation}
  u=v+w\in W^{1,\frac{2t}{8-t}}_{loc}(B_1).
 \end{equation}
 The desired local estimate \eqref{local estimate for u} follows directly. The proof is thus finished.

\end{proof}

Again note that
\begin{equation}
 \frac{2\sigma}{2-\sigma}>2
\end{equation}
as long as $p,t> 4$. We will apply it to the equation \eqref{equation for phi} with $\psi\in L^{t_1-o}_{loc}(B_1)$ where $t_1=\frac{2}{3}(p+2)$ as in the previous section. Then we conclude that $\na\phi\in L^{q_1-o}_{loc}(B_1)$ with
\begin{equation}
 q_1=\frac{pt_1}{p+t_1}\wedge \frac{2t_1}{8-t_1}=\frac{2p(p+2)}{5p+4}\in (2,p).
\end{equation}
Moreover, for any $U\Subset B_1$ and any $q< q_1$, we have the estimate
\begin{equation}\label{first improvement for phi}
 \|\phi\|_{W^{1,q}(U)}\le C(U,p,t,q)\left(\|\phi\|_{W^{1,2}(B_1)}+\|\psi\|^4_{L^t(U')}+\|\chi\|_{L^p(B_1)}\|\psi\|_{L^{t}(U')}\right),
\end{equation}
for some $t<t_1$, where $U\Subset U'\Subset B_1$.

\section{Improvement of Regularity by an Iteration Procedure}

In this section we prove Theorem \ref{result for abstract system}, and in the end we give two examples of different values of $p$ and different terminating values $q_*$.
\begin{proof}[Proof of Theorem \ref{result for abstract system}]
Consider a solution $(\phi,\psi)$ to \eqref{equation for phi}-\eqref{equation for psi}. As we have seen, after applying Lemma \ref{preparation lemma for psi} and Lemma \ref{preparation lemma for phi} once, one has
\begin{align}
 \psi\in L^{t_1-o}_{loc}(B_1)\cap L^4(B_1) \subsetneq L^4(B_1), & &\na\phi\in L^{q_1-o}_{loc}(B_1)\cap L^2(B_1)\subsetneq L^2(B_1).
\end{align}
Next we use an iteration argument to improve the regularities. As aforementioned, since there are some nonsmooth coefficients, one should not expect that this procedure goes to infinity. Actually it terminates at a certain point, as shown below. It may be reasonable to expect that $\phi\in W^{1,p}_{loc}(B_1)$ and $\psi\in W^{1,\frac{p}{2}}_{loc}(B_1)$. But we will see that for the system \eqref{equation for phi}-\eqref{equation for psi}, this is not always the case.

Before dealing with the general solutions, let's consider some particular cases.

First note that, once $\psi$ is shown to be in $L^{\infty-o}_{loc}(B_1)$ and $\na\phi\in L^{p-o}_{loc}(B_1)$, then the standard elliptic theory applied to \eqref{equation for psi} immediately implies
\begin{equation}
 \psi \in W^{1,\frac{p}{2}-o}_{loc}(B_1)\hookrightarrow C^0(\mathrm{\mathrm{int}} B_1)\hookrightarrow L^{\infty}_{loc}(B_1),
\end{equation}
where $\mathrm{int} B_1$ denotes the interior of the unit disk. It follows from the equations that $\na\phi\in L^p_{loc}(B_1)$ and thus~$\psi\in W^{1,\frac{p}{2}}_{loc}(B_1)$. Since the gravitino $\chi$ is involved in the divergence term, one cannot expect more.

Second, when $p=\infty$, the situation is almost trivial. Actually, now~$B^a=-e_\al\cdot \na\phi^a\cdot \chi^\al\in L^2(B_1)$ for each~$a$. From Lemma 6.1 in \cite{jost2016regularity} it follows that $\psi\in L^{\infty-o}_{loc}(B_1)$. Then applying Lemma \ref{preparation lemma for phi} we get $\phi\in W^{1,p-o}_{loc}(B_1)$. This returns to the situation above, and also finishes the proof for the case $p=\infty$.

Thus in the following we may assume $4<p<\infty$. We describe the abstract procedure by a recursive algorithm:
\begin{enumerate}
 \item[\textcircled{1}] Suppose it has been shown that $\psi\in L^t_{loc}(B_1)$ and $\na\phi\in L^q_{loc}(B_1)$ for some $t>4$ and~$q>2$.
 \item[\textcircled{2}] Then $B\in L^s_{loc}(B_1)$ with $s=s(q)=\frac{pq}{p+q}>\frac{4}{3}$.

                        If $s\ge 2$, then as before we immediately get $\psi\in L^{\infty-o}_{loc}(B_1)$ and $\na\phi\in L^{p-o}_{loc}(B_1)$. The desired result follows. Thus we may take $q<\frac{2p}{p-2}\equiv Q_0(p)$ in \textcircled{1} so that $s<2$.
 \item[\textcircled{3}] By Lemma \ref{preparation lemma for psi}, $\psi\in L^{T(q)-o}_{loc}(B_1)$ with
                        \begin{equation}
                         T\equiv T(q)=\frac{8}{6-3s(q)}=\frac{8(p+q)}{6p+6q-3pq}\in (4,\infty).
                        \end{equation}
 \item[\textcircled{4}] To determine the value of $\sigma$, we need to compare
                        \begin{equation}
                         \frac{T}{4}=\frac{2(p+q)}{6p+6q-3pq}
                        \end{equation}
                        and
                        \begin{equation}
                         \frac{2pT}{2(p+T)+pT}=\frac{8p(p+q)}{(-3p^2+10p+8)q+(10p^2+8p)}.
                        \end{equation}
                        A simple calculation shows that
                        \begin{equation}
                         \frac{T}{4}\ge \frac{2pT}{2(p+T)+pT}\Leftrightarrow q\ge \frac{14p^2-8p}{9p^2-14p+8}.
                        \end{equation}
                        Since $q>2$ while $\frac{14p^2-8p}{9p^2-14p+8}<2$ (since $p>4$ by assumption), the value of $\sigma$ is determined by
                        \begin{equation}
                         \sigma=\frac{2pT}{2(p+T)+pT}\wedge\frac{T}{4}=\frac{2pT}{2(p+T)+pT}
                         =\frac{8p(p+q)}{(-3p^2+10p+8)q+(10p^2+8p)}.
                        \end{equation}
                        For $q\in (2,\frac{2p}{p-2})$, $\sigma$ lies in the interval
                        \begin{equation}
                         \left(\frac{2p(p+2)}{p^2+7p+4}, \frac{2p}{p+2}\right),
                        \end{equation}
                        which is a proper subinterval of $(1,2)$. In particular, $\sigma<2$ and
                        \begin{equation}
                         \frac{2\sigma}{2-\sigma}=\frac{pT}{p+T}=\frac{8p(p+q)}{(-3p^2+6p+8)q+(6p^2+8p)}\eqqcolon Q(q)\equiv Q.
                        \end{equation}
 \item[\textcircled{5}] Lemma \ref{preparation lemma for phi} then shows that $\na\phi\in L^{Q-o}_{loc}(B_1)$.
 \item[\textcircled{6}] Compare the value of $q$ and $Q(q)$.

                        Case 1: $q<Q(q)<Q_0=\frac{2p}{p-2}$. Then go to \textcircled{1} with $\psi\in L^{T(q)-o}_{loc}(B_1)$ and $\na\phi\in L^{Q(q)-o}_{loc}(B_1)$, and then go through the procedure again.

                        Case 2: $Q(q)\ge Q_0$. Then $B\in L^2_{loc}(B_1)$. The desired result is obtained as before.

                        Case 3: $Q(q)\le q$. Then this procedure also terminates, with $t_*=T(q)$ and $q_*=Q(q)$ in the statement of Theorem \ref{result for abstract system}.

\end{enumerate}

Next we analyze the limiting behavior of such an iteration. It turns out that this is determined by $p$.

As indicated in step \textcircled{6}, we need to analyze the value of $Q(q)$. Consider the equation $Q(q)=q$, which is equivalent to
\begin{equation}
 (-3p^2+6p+8)q^2+6p^2q-8p^2=0.
\end{equation}
The discriminant is
\begin{equation}
 \begin{split}
   \Delta&=\left(6p^2\right)^2-4\left(-3p^2+6p+8\right)\times \left(-8p^2\right) \\
         &=4p^2\left(-15p^2+48p+64\right) \\
         &=4p^2\left[-15\left(p-\frac{8}{15}\right)^2+\frac{512}{5}\right].
 \end{split}
\end{equation}
Thus for $p>4$,
\begin{equation}
 \begin{cases}
  \Delta\ge 0, &\textnormal{if  } 4<p\le \frac{8}{15} \left(3+2\sqrt{6}\right), \\
  \Delta<0,    &\textnormal{if  } p>\frac{8}{15} \left(3+2\sqrt{6}\right),
 \end{cases}
\end{equation}
where $\frac{8}{15} \left(3+2\sqrt{6}\right)\approx 4.2132\cdots$ and we denote this number by $p_0$.

Even if $Q(q)=q$ has a solution, we still need to know whether the solution lies in the interval~$\left(2,Q_0(p)\right)$, where $Q_0(p)=\frac{2p}{p-2}$. This is actually the case, since the solutions are explicitly given by
\begin{equation}
 q_{\pm} =\frac{3p^2\pm p\sqrt{-15p^2+48p+64}}{3p^2-6p-8}.
\end{equation}
One can check that $q_{\pm}$ are always smaller than $Q_0(p)$ for $p>4$. Figure 1 shows the relation of $q_{\pm}$ and $Q_0$.

\begin{figure}[h!]
 \includegraphics[width=0.55\textwidth]{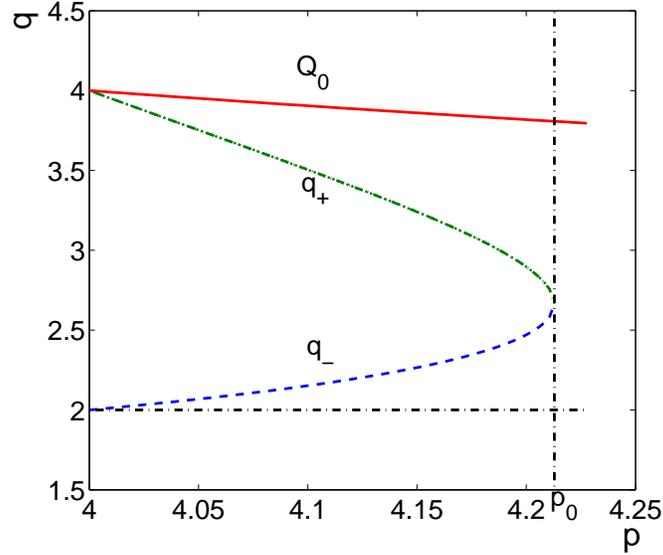}
 \caption{Comparison of $q_{\pm}$ and $Q_0$.}
\end{figure}

Thus the improvement will not work at $q_*=q_-(p)$ for $p\le p_0$. The corresponding $t_*$ is given by~$T(q_*)$. On the other hand if $p>p_0$, then one can easily get the regularity improved to the expected level.

The desired estimates follows from an iterated combination of \eqref{first improvement for psi} and \eqref{first improvement for phi}. The proof of Theorem \ref{result for abstract system} is completed.

\end{proof}

We remark that
\begin{equation}
 \frac{2t_*}{2+t_*}>\frac{\frac{p}{2}t_*}{\frac{p}{2}+t_*}=\frac{pq_*}{p+q_*},
\end{equation}
which prevents us from further improvements.

Finally we give two graphs to explain how the procedure works for both a large $p$ $(p=5)$ and a relatively small $p$ $(p=4.15)$.
\begin{figure}[!h]
 \centering
 \begin{subfigure}[b]{0.4\textwidth}
  \includegraphics[width=\textwidth]{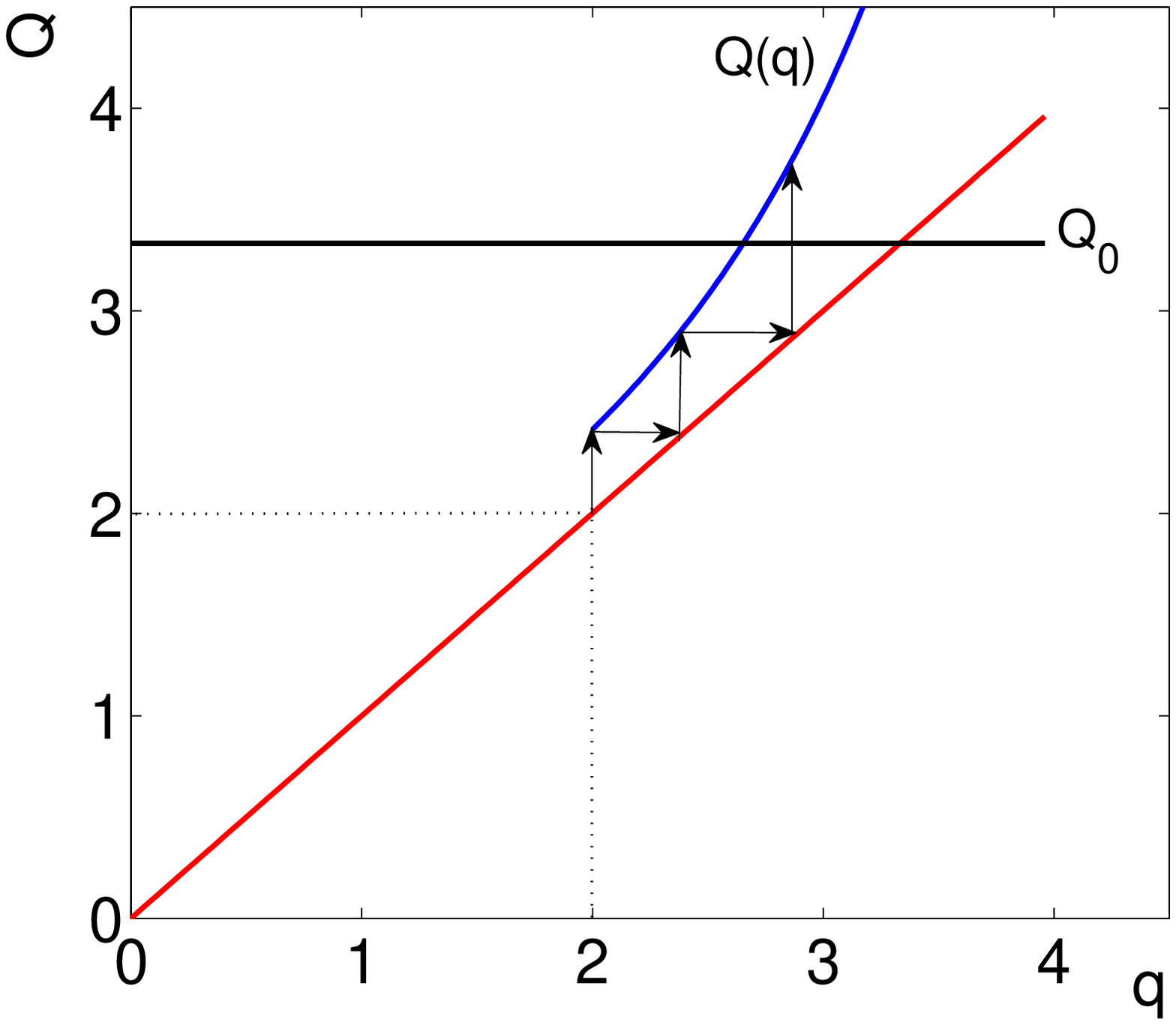}
  \caption{Figure 2 (p=5)}
 \end{subfigure}
 \begin{subfigure}[b]{0.415\textwidth}
  \includegraphics[width=\textwidth]{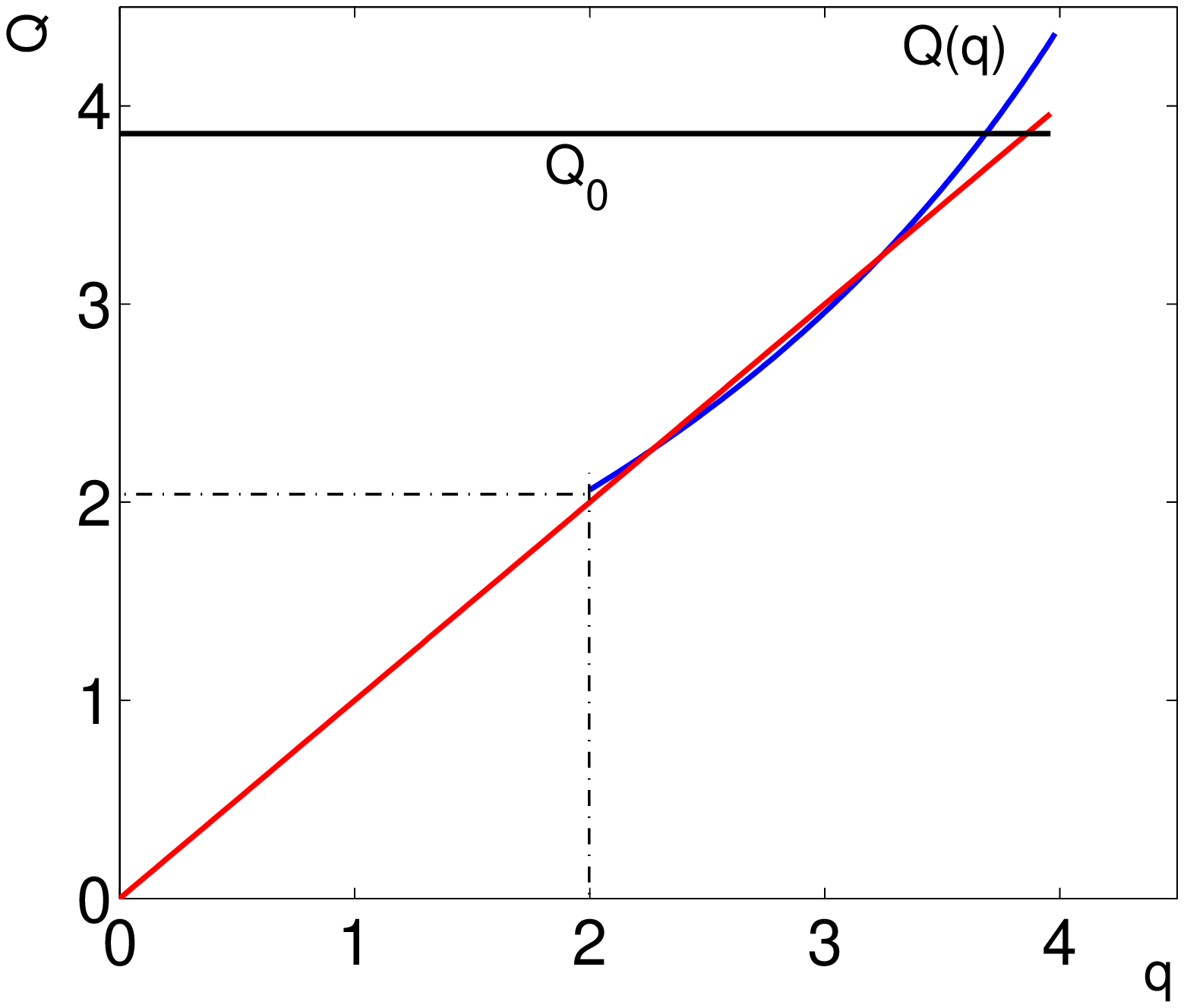}
  \caption{Figure 3 (p=4.15)}
 \end{subfigure}
\end{figure}

Here the horizontal lines stand for the barrier $Q_0(p)=\frac{2p}{p-2}$.

\section{Regularity of the Critical Points of the Action Functional}

We can now turn to the regularity of the critical points of the action functional \eqref{action functional}, or equivalently the solutions of the Euler--Lagrange equations \eqref{equation for phi'}-\eqref{equation for psi'}.
In contrast to Theorem \ref{result for abstract system}, the solutions of the Euler--Lagrange equations have the expected regularities, due to the structure of the equations.

\begin{proof}[Proof of Theorem \ref{result for the original system}]
Let $(\phi',\psi')$ be a solution to \eqref{equation for phi'}-\eqref{equation for psi'}. To prove Theorem \ref{result for the original system}, it suffices to consider the case where $4<p\le p_0$. Recall that Theorem \ref{result for abstract system} already gives $\psi'\in L^{t_*-o}_{loc}(B_1)$ and $\na\phi'\in L^{q_*-o}_{loc}(B_1)$ with
\begin{equation}
 q_*=\frac{3p^2-p\sqrt{-15p^2+48p+64}}{3p^2-6p-8}
\end{equation}
and
\begin{equation}
 t_*=T(q_*)=\frac{8p+8q_*}{6p+6q_*-3pq_*}.
\end{equation}
They are compared as
\begin{equation}
 4<p<2q_*<t_*.
\end{equation}
One should also note the following equalities
\begin{align}\label{limit equality}
 \frac{\frac{p}{2}t_*}{\frac{p}{2}+t_*}=\frac{pq_*}{p+q_*}, & & q_*=\frac{pt_*}{p+t_*}.
\end{align}
The regularity of $(\phi',\psi')$ is improved as follows. Set $t_0=t_*$ and $q_0=q_*$. We will temporarily use the notation
\begin{equation}
 L^q_{loc}(B_1)\cdot L^r_{loc}(B_1)\equiv
 \left\{ u\cdot v\big| u\in L^q_{loc}(B_1), v\in L^r_{loc}(B_1) \right\},
\end{equation}
for any $q,r\in [1,\infty]$. By H\"older inequality,
\begin{equation}
 L^q_{loc}(B_1)\cdot L^r_{loc}(B_1)\subset L^{\frac{qr}{q+r}}_{loc}(B_1).
\end{equation}
We may suppress the domain $B_1$ whenever it is clear.

First consider $\psi'$. Note that the coefficients $A^{ab}$'s are actually bad terms in the sense that
\begin{equation}
 A^{ab}\in L^{q_0}_{loc} \cap L^{\frac{t_0}{2}}_{loc} \cap L^{\frac{p}{2}}_{loc}=L^{\frac{p}{2}}_{loc},
\end{equation}
that is, it cannot be improved, due to the appearance of $|Q\chi|^2$ in $A^{ab}$. Thus by \eqref{equation for psi'} and thanks to \eqref{limit equality},
\begin{equation}
 \pd\psi'\in \left( L^{\frac{p}{2}}_{loc} \cdot L^{t_0-o}_{loc} \right) \bigcap \left( L^{q_0-o}_{loc} \cdot L^p\right)
 = L^{\frac{p}{2}}_{loc} \cdot L^{t_0-o}_{loc}
 = L^{q_0-o}_{loc} \cdot L^p.
\end{equation}
 It follows that
 \begin{equation}
  \psi'\in W^{1,\frac{p q_0}{p+q_0}-o}_{loc}(B_1)\hookrightarrow L^{t_1-o}_{loc}(B_1),
 \end{equation}
 with
 \begin{equation}
  t_1=\frac{2\cdot \frac{\frac{p}{2}t_0}{\frac{p}{2}+t_0}}{2-\frac{\frac{p}{2}t_0}{\frac{p}{2}+t_0}}
     =\frac{p}{p-(\frac{p}{2}-2)t_0}t_0>t_0,
 \end{equation}
 and hence
 \begin{equation}
  \frac{1}{t_1}=\frac{1}{t_0}-(\frac{1}{2}-\frac{2}{p})<\frac{1}{t_0}.
 \end{equation}
 On the other hand,
 \begin{equation}
  t_1=\frac{2\cdot \frac{pq_0}{p+q_0} }{2- \frac{pq_0}{p+q_0} }=\frac{2pq_0}{2p+2q_0-pq_0},
 \end{equation}
 from this it directly follows that
 \begin{equation}
  \frac{1}{t_1}=\frac{1}{q_0}+\frac{1}{p}-\frac{1}{2}.
 \end{equation}

 Next we turn to $\phi'$. As $t_1>t_0>\frac{6p}{2+p}$, by Lemma \ref{preparation lemma for phi}, we have
 \begin{equation}
  \na\phi'\in L^{q_1-o}_{loc}(B_1),
 \end{equation}
 with
 \begin{equation}
  q_1=\frac{pt_1}{p+t_1}, \quad \frac{1}{q_1}=\frac{1}{p}+\frac{1}{t_1}=\frac{1}{q_0}-(\frac{1}{2}-\frac{2}{p}).
 \end{equation}
 Note that this implies
 \begin{equation}
  \frac{1}{q_1}+\frac{1}{p}=\frac{2}{p}+\frac{1}{t_1}.
 \end{equation}

 Finally, by repeating such a procedure, we conclude that for $k\ge 1$,
  \begin{equation}
   \frac{1}{t_k}=\frac{1}{t_0}-k(\frac{1}{2}-\frac{2}{p}),
   \quad
   \frac{1}{q_k}=\frac{1}{q_0}-k(\frac{1}{2}-\frac{2}{p}).
  \end{equation}
  Therefore, after finitely many steps we are led to
  \begin{equation}
   \psi'\in W^{1,\frac{p}{2}}_{loc}(B_1), \quad \phi'\in W^{1,p}_{loc}(B_1).
  \end{equation}
  The conclusion of Theorem \ref{result for the original system} then follows.

\end{proof}


\begin{thebibliography}{HKW}

\bibitem{ammann2003variational}
Bernd Ammann.
A variational problem in conformal spin geometry.
Habilitation (Hamburg University),
2003.

\bibitem{adams1975note}
David R. Adams.
A note on Riesz potentials.
Duke Math. J. 42(4):765–778,
1975.


\bibitem{branding2015some}
Volker Branding.
Some aspects of Dirac-harmonic maps with curvature term.
Differential Geometry and its Applications 40:1--13,
2015.

\bibitem{branding2015energy}
\bysame.
Energy estimates for the supersymmetric nonlinear sigma model and applications.
Potential Anal (2016) 45:737--754,
2016.


\bibitem{brink1976locally}
L. Brink, Paolo Di Vecchia and Paul Howe.
A locally supersymmetric and reparametrization invariant action for the spinning string.
Physics Letters B 65(5):471--474,
1976.

\bibitem{chen2005regularity}
Qun Chen, Jürgen Jost, Jiayu Li and Guofang Wang.
Regularity theorems and energy identities for Dirac-harmonic maps.
Mathematische Zeitschrift 251(1):61--84, 2005.

\bibitem{chen2006dirac}
\bysame.
Dirac-harmonic maps.
Mathematische Zeitschrift 254(2):409--432,
2006.

\bibitem{chen2007}   
Qun Chen, J{\"u}rgen Jost and Guofang Wang.
Liouville theorems for Dirac-harmonic maps.
J. Math. Phys 48(11):113517,
2007.

\bibitem{chen2011boundary}
Qun Chen, Jürgen Jost, Guofang Wang and Miaomiao Zhu.
The boundary value problem for Dirac-harmonic maps.
J. Eur. Math. Soc. 15(3):997--1031,
2013.

\bibitem{chen1998elliptic}
Ya-Zhe Chen and Lan-Cheng Wu.
Second order elliptic equations and elliptic systems.
Translations of mathematical monographs 174.
American Mathematical Society, Providence,
1998.

\bibitem{deligne1999quantum}
Pierre Deligne et al.
Quantum fields and strings: a course for mathematicians.
American Mathematical Society, Providence,
1999.

\bibitem{deser1976complete}
Stanley Deser and Bruno Zumino.
A complete action for the spinning string.
Physics Letters B 65(4):369--373,
1976.

\bibitem{giaquinta1983multiple}
Mariano Giaquinta.
Multiple integrals in the calculus of variations and nonlinear elliptic systems.
Princeton University Press, New Jersey,
1983.


\bibitem{helein1991}
Frederic H\'elein.
R\'egularit\'e des applications faiblement harmoniques entre une surface et 
une variet\'e riemannienne.
C.R. Acad. Sci. Paris 312: 591--596,
1991

\bibitem{jost2008riemannian}
Jürgen Jost.
Riemannian geometry and geometric analysis.
Springer, Berlin,
6th ed., 2011.

\bibitem{jost2009geometry}
\bysame.
Geometry and physics.
Springer, Berlin,
2009.

\bibitem{jost2014super}
Jürgen Jost, Enno Keßler and Jürgen Tolksdorf.
Super Riemann surfaces, metrics, and gravitinos.
2014, to appear in Advances in Theoretical and Mathematical Physics 21(5). 
arXiv:1412.5146[math-ph].

\bibitem{jost2015geometric}
Jürgen Jost, Lei Liu, Miaomiao Zhu.
Geometric analysis of the action functional of the nonlinear supersymmetric sigma model.
2015,
MPI MIS Preprint: 77/2015.

\bibitem{jost2016regularity}
Jürgen Jost, Enno Keßler, Jürgen Tolksdorf, Ruijun Wu, Miaomiao Zhu.
Regularity of solutions of the nonlinear sigma model with gravitino.
2016, to appear in Comm. Math. Phys.
arXiv:1610.02289[math.DG].


\bibitem{lawson1989spin}
H. Blaine Lawson and Marie-Louise Michelsohn.
Spin geometry.
Princeton University Press, New Jersey,
1989.

\bibitem{riviere2007conservation}
Tristan Rivière.
Conservation laws for conformally invariant variational problems.
Invent. Math. 168(1):1--22,
2007.

\bibitem{riviere2010conformally}
\bysame.
Conformally invariant 2-dimensional variational problems.
Cours joint de l’Institut Henri Poincaré, Paris,
2010.

\bibitem{riviere2008partial}
Tristan Rivière and Micheal Struwe.
Partial regularity for harmonic maps, and related problems.
Communications in Pure and Applied Mathematics 61(4):0451--0463,
2008.


\bibitem{sharp2013decay}
Ben Sharp and Peter Topping.
Decay estimates for Riviere’s equation, with applications to regularity and compactness.
Transactions of the American Mathematical Society 365(5):2317–2339,
2013.

\bibitem{sharp2016regularity}
Ben Sharp and Miaomiao Zhu.
Regularity at the free boundary for Dirac-harmonic maps from surfaces.
Calc. Var. Partial Differ. Equ. 55(2):55:27,
2016.


\bibitem{wang2010remark}
Changyou Wang.
A remark on nonlinear Dirac equations.
Proceedings of the American Mathematical Society 138(10):3753–3758,
2010.

\bibitem{wang2009regularity}
Changyou Wang, Deliang Xu.
Regularity of Dirac-Harmonic maps.
Int. Math. Res. Not. 20:3759--3792,
2009.

\bibitem{zhu2009regularity}
Miaomiao Zhu.
Regularity for weakly Dirac-harmonic maps to hypersurfaces.
Annals of Global Analysis and Geometry 35(4):405–412,
2009.

\end{thebibliography}
\end{document}